\documentclass[14pt]{article}

\usepackage[cp1251]{inputenc}
\usepackage[english]{babel}
\usepackage{amsfonts,amssymb,amsmath,amstext,amsbsy,amsopn,amscd,amsthm,graphicx,euscript,graphicx}
\usepackage{graphics}

\textwidth=15.5cm \textheight=21.1cm \oddsidemargin=11.3mm

\newtheorem{thm}{Theorem}
\newtheorem{lm}{Lemma}

\newtheorem{exm}{Example}

\newcounter{tdfn}
\setcounter{tdfn}{1}
\newenvironment{dfn}
{\vspace{0.15cm}{\bf Definition \arabic{tdfn}.}} {\par
\addtocounter{tdfn}{1}}
\newcounter{trk}
\setcounter{trk}{1}
\newenvironment{rk}
{\vspace{0.15cm}{\bf Remark \arabic{trk}.}} {\par
\addtocounter{trk}{1}}
{\endtrivlist}

\def\:{\colon}

\def\0{{\mathbf 0}}
\def\1{{\mathbf 1}}

\title{A Sliceness Criterion for Odd Free knots}

\author{D. Fedoseev, V. Manturov \footnote{The research was partly supported by the RFBR grant (project 16-11-10291).}}

\date{}

\begin{document}

\maketitle



\begin{abstract}
The main goal of this paper is to prove that for {\em odd free knots} --- that is free knots with all odd crossings --- the problem of sliceness (the existence of a spanning disc) has an explicit answer based on the pairing of the knot diagram chords.
\end{abstract}


The notion of {\em parity} was introduced to topology by the second named author in 2009 \cite{Sbornik}. It allowed one to strengthen many invariants of virtual knots, free knots, etc. and to create many new invariants. One of the main properties of parity is that it allows one to produce {\em picture--valued} invariants --- invariants valued in knot diagrams or their linear combinations. They make it possible to prove statements of the form {\em if a diagram is complicated enough, it reproduces itself} (see also \cite{Pic1,Pic2}).

This statement can be formulated in a compact form the following way: $$[K]=K.$$ Here on the left--hand side $K$ means a knot or a link (an equivalence class of diagrams) and on the right--hand side $K$ is a single (complicated) diagram of the same knot. The bracket $[K]$ is constructed combinatorially and this equality means that for every diagram $K'$ which is equivalent to the diagram $K$ we have $[K']=K$. By construction it means that the diagram $K$ can be obtained form the diagram $K'$ by the means of {\em smoothing} operations. 

So many properties of {\em all} diagrams of a knot (say, $K'$) are ``encoded'' in a {\em single} diagram $K$. In particular, looking at this single diagram of a (free) knot or link one can judge its non-triviality, the minimal number of crossings in the diagrams of the knot, etc.

The main result of this paper makes it possible for some (odd) diagrams to recognise another property: sliceness. One of the most important equivalence relations on classical knot is the relation of {\em concordance} --- the possibility to be connected by a cylinder in $\mathbb{R}^3\times[0,1]$ so that one knot lies in $\mathbb{R}^3\times\{0\}$ and the other lies in $\mathbb{R}^3\times\{1\}$. In classical knot theory this notion is especially important because there are different types concordance and cobordism --- topological and smooth, and they are closely connected to different problems in four-dimensional topology (see \cite{Fri,Ras}).

The notion of concordism can easily be extended to free knots. A (free) knot is called {\em slice} if it is concordant to the trivial knot.

The main result of this paper is as follows:

\begin{thm}
\label{mnthm}
If a diagram $K$ of a free knot is {\em odd}, then it is {\em slice} if and only if its chords can be {\em paired with no intersections}.
\end{thm}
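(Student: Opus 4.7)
The plan is to treat the two implications separately.

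\emph{Sufficiency.} Assume the chords of $K$ admit a pairing with no intersections. I would construct a concordance of $K$ to the empty diagram by performing, for each pair $\{a,b\}$ of chords, a simultaneous saddle move resolving the two chords together. The non-intersection hypothesis guarantees that each such double smoothing can be realised by a single saddle without introducing new obstructions; performing all such saddles in an order compatible with the nesting of the pairing leaves a disjoint union of chordless circles, which bound discs and witness sliceness.

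\emph{Necessity.} Assume $K$ is slice. Then there is a slice movie: a sequence of Reidemeister moves interspersed with births, deaths, and saddles which takes $K$ to the empty diagram. In the chord diagram picture, each saddle amounts to a surgery along (a pair of) chords. The strategy is to exploit the picture-valued invariant $[K]=K$, valid because $K$ is odd, to push information about each saddle back to the original diagram. Concretely, I would track each chord of $K$ through the movie and identify the chord with which it is eventually cancelled at a saddle; this assigns a partner to every chord and so defines a pairing. The invariance $[K]=K$ forces Reidemeister moves in the movie to preserve the set of odd chords in a controlled way, so that the partner assignment descends to a well-defined pairing on the chords of $K$ itself. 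The no-intersection condition should then follow from the fact that every chord of $K$ is odd: if two paired chords crossed each other in $K$, the saddle that pairs them would produce, at the corresponding stage of the movie, a configuration containing a chord of the wrong parity, contradicting the compatibility of the movie with the invariant.

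The hard part will be the necessity direction, and specifically the passage from an arbitrary slice movie (which may contain many interleaved Reidemeister moves, births and deaths) to a pairing defined on the chords of the original diagram $K$. One needs a bookkeeping argument, probably by induction on the length of the movie or on the number of chords, showing that any Reidemeister move in the movie either preserves the partially constructed pairing or can be ``commuted past'' the saddles. The oddness hypothesis and the rigidity provided by $[K]=K$ are precisely what make this commutation possible; without them a saddle could be accompanied by the creation of new chords whose partners lie outside the original diagram, breaking the pairing.
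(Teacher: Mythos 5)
Your sufficiency argument is in the right spirit and is essentially the paper's (which is even more direct: draw the chord diagram $C({\cal P})$ of the pairing inside a disc and glue the paired chords into double lines, a self-paired chord producing a cusp; the non-intersection hypothesis is exactly what lets this be done inside a single embedded disc image). The necessity direction, however, has a genuine gap, and it is located precisely where you say ``the hard part will be.'' The invariant $[K]=K$ is an invariant of Reidemeister equivalence only; it is not a cobordism invariant, and saddles, births and deaths do not preserve it. So it cannot ``force Reidemeister moves in the movie to preserve the set of odd chords in a controlled way'' across the saddle events --- indeed the whole difficulty of sliceness questions is that the 1-dimensional bracket gives no control over what happens during a saddle. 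Moreover, a chord of $K$ need not be ``eventually cancelled at a saddle'' at all: odd crossings can disappear in pairs under second Reidemeister moves, and new crossings with partners outside the original diagram can be created and destroyed arbitrarily. Your proposed partner assignment and the claimed descent to a pairing on $K$ are therefore not defined by anything you have written, and the commutation argument you defer is the entire content of the theorem.

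The paper closes this gap with a different, genuinely two-dimensional mechanism, which is the missing idea in your proposal. One works directly with the spanning $2$-complex $K$ (the image of a disc) rather than with a movie. A \emph{smoothing of a double line} of such a complex is defined (cut along the two preimage curves on the Gauss sphere and reglue, inducing $1$-dimensional smoothings at the triple points on that line), and a smoothing lemma shows that smoothing all \emph{interior} double lines, always choosing the resolution that does not disconnect, terminates and leaves a complex $\tilde K$ whose double lines all end on the boundary. Then the \emph{Gaussian parity on the $2$-complex} is applied: a double line ending in a cusp is even, and at a triple point at least one of the three double lines is even; since the boundary double lines of $\tilde K$ inherit the (odd) parity of the crossings of the odd diagram, $\tilde K$ has no cusps and no triple points, so its double lines are disjoint arcs joining boundary vertices --- exactly a pairing with no intersections. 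Without some substitute for this smoothing-plus-parity argument on the surface itself, your necessity direction does not go through.
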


Recall the main definitions from \cite{Sbornik, Cobordism}.
Here a free knot diagram is a {\em framed 4-valent graph}. In the one-component case (and we are interested in that case only) a framed 4-valent graph is either a circle (with an empty vertex set) or a 4-regular (multi)graph with the following structure: at every vertex four edges are divided into two sets of formally opposite ones. Every framed 4-valent graph has a naturally corresponding {\em chord diagram}.

Free knots are equivalence classes of framed 4-graphs with one component modulo Reidemeister moves which we will not list here since equivalent framed 4-graphs are cobordant. 
We say that two framed 4-graphs $\Gamma_{1},\Gamma_{2}$ are {\em cobordant} if there exists a {\em spanning surface} that is a 2-complex --- an image of a cylinder under a continious map in general position such that the images of its boundary are the graphs $\Gamma_{1},\Gamma_{2}$ and in the neighbourhood of the preimage of every vertex of the graph  $\Gamma_{i}$ the image of the cylinder boundary component lies in the union of the opposite halfedges.

Clearly, this relation is indeed an equivalence relation. If a framed 4-graph $\Gamma$ is equivalent to the trivial knot (a circle with no vertices) the graph is called {\em slice}. Capping the circle with a disc we obtain a {\em spanning disc} for $\Gamma$.

The first examples of non--slice framed 4-graphs were constructed in \cite{Cobordism}.

Now let us consider the last definition from Theorem \ref{mnthm}.

Let $\Gamma$ be a framed 4-graph, $D(\Gamma)$ the corresponding chord diagram. Let us call a {\em pairing} ${\cal P}$ of its chords a partition of the chords $\{d_{i}\}$ of the diagram $D(\Gamma)$ into 
sets ${\cal P}_{i}$ consisting of one or two elements each so that for every set ${\cal P}_{i}$ consisting of two chords $c_{i},d_{i}$ every endpoint of the chord $c_i$ corresponds to exactly one endpoint of the chord $d_i$. Chord diagram $C({\cal P})$ consists of the set of chords, the endpoints of which coincide with endpoints of the chords of the diagram $D$ and every chord either coincides with a chord of the diagram $D$ if the corresponding set ${\cal P}_{i}$ consists of a single element, or connects the corresponding endpoints of the two different chords of a set.

We say that {\em a pairing ${\cal P}$ has no intersections} if the chords of the diagram $C({\cal P})$ are pairwise unlinked.

From this definition it is clear that Theorem \ref{mnthm} conditions are sufficient: if we have a pairing, we can draw the diagram $D({\cal P})$ inside a disc $D^{2}$ and then glue together the points on paired chords. Inside every chord which lies in a 1-element set this glueing produces a cusp point. So we obtain a complex --- an image of a disc, all multiple points of which are double and all the double lines are segments with their endpoints on the disc boundary.

\begin{exm}
Consider Fig. \ref{lab}. The pairing is of the form $$\{\{c_{1}\},\{c_{2},d_{2}\},\{c_{3},d_{3}\}\};$$ on the right--hand side of the figure one can see a cusp point on the chord $c_1$.

\begin{figure}
\centering\includegraphics[width=300pt]{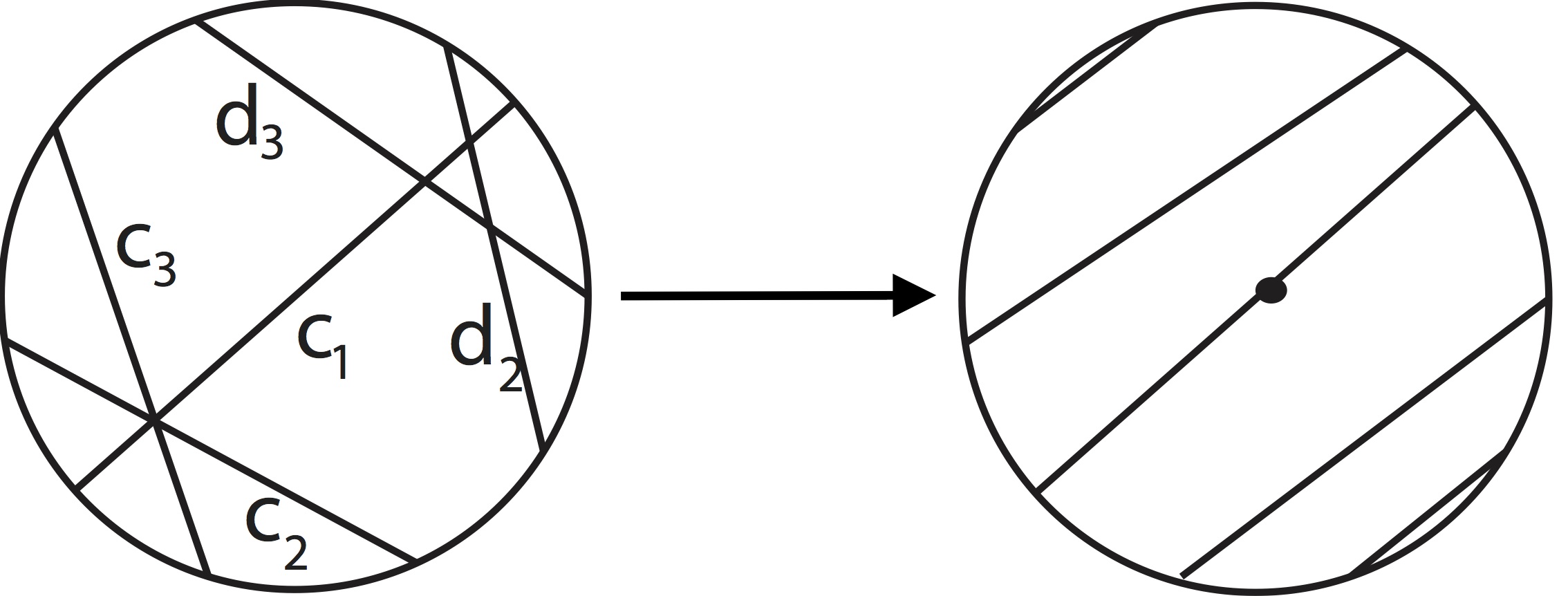}
\caption{Correct pairing yields sliceness}
\label{lab}
\end{figure} 

\end{exm}

Note that Theorem \ref{mnthm} gives a {\em finite procedure} to determine if a given odd free knot is slice.

\begin{exm}
Consider the free knot depicted on Fig. \ref{k5}.

The chord diagram on the figure has ten chords: five ``long'' ones --- $a_{1},a_{2},a_{3},a_{4},a_{5}$ and five ``short'' ones --- $b_{1},b_{2},b_{3},b_{4},b_{5}$.

Let us show that the diagram doesn't allow a pairing with no intersections. First of all, as it was mentioned before, if such a pairing exists every odd chord should be paired with another odd chord. Considering different cases it is easy to verify that two short chords can't be paired with each other. That means that every long chord should be paired to a short one.

If the chord $b_1$ is paired with the chord $a_1$, then every long chord $a_{2} ,a_{3} ,a_{4} ,a_{5}$ is paired to a long chord leading to a contradictory pairing of the short chords.

Pairing of the chord $a_1$ with the chord $b_3$ (resp. $b_4$) makes it impossible to pair the chord $b_5$ (resp. $b_2$).

Pairing of the chord $a_1$ with the chord $b_2$ (resp. $b_5$) makes it impossible to pair the chord $b_4$ (resp. $b_3$).

\begin{figure}
\centering\includegraphics[width=200pt]{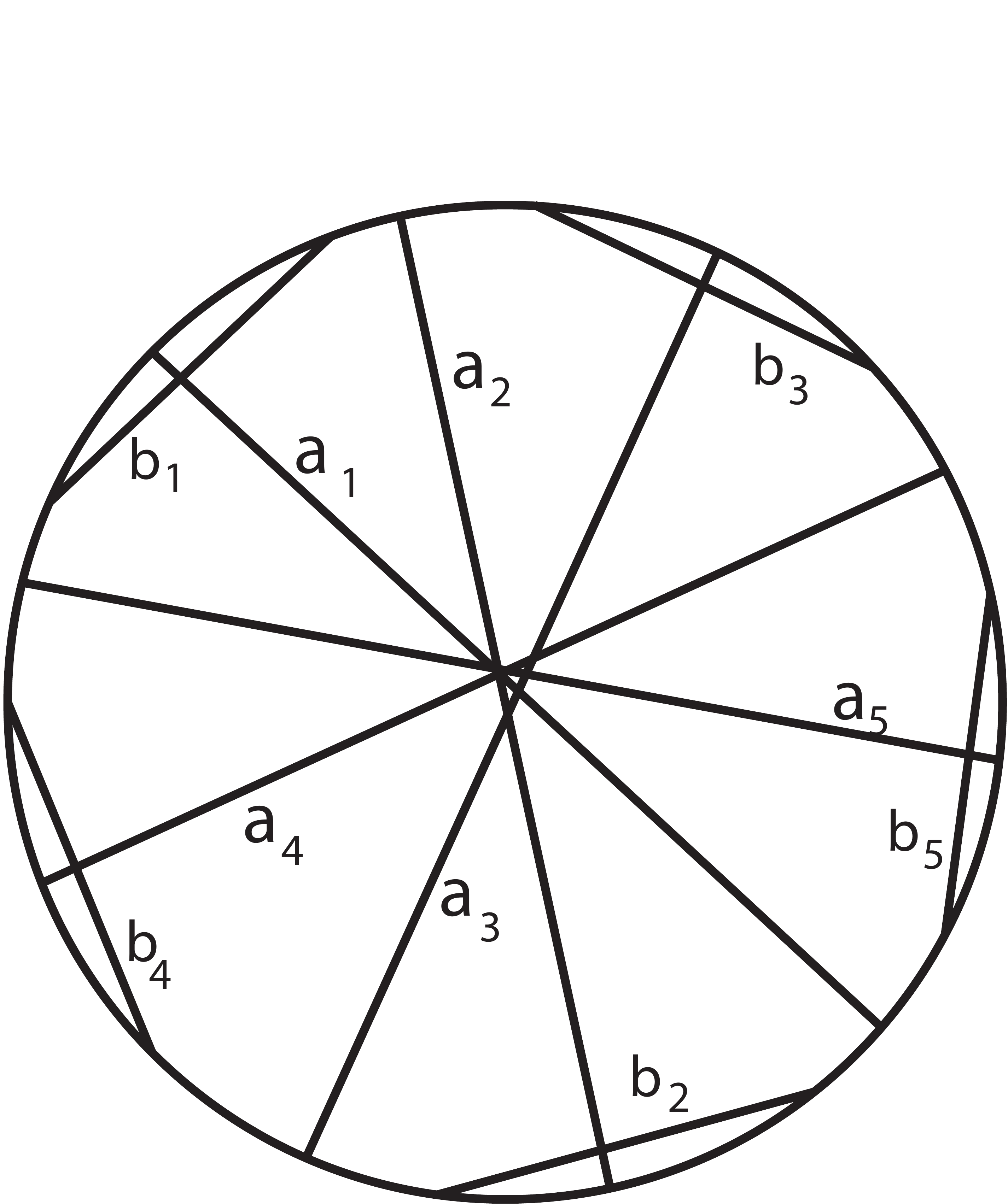}
\caption{Non--slice free knot}
\label{k5}
\end{figure}

\end{exm}

\begin{rk}
The example on Fig. \ref{lab} shows a case when a chord is paired with itself (producing a cusp) is necessary to create a spanning disc. 

It is easy to construct an example when triple points a needed. Such a diagram is shown, for example, on Fig. \ref{diag}.

\begin{figure}
\centering\includegraphics[width=300pt]{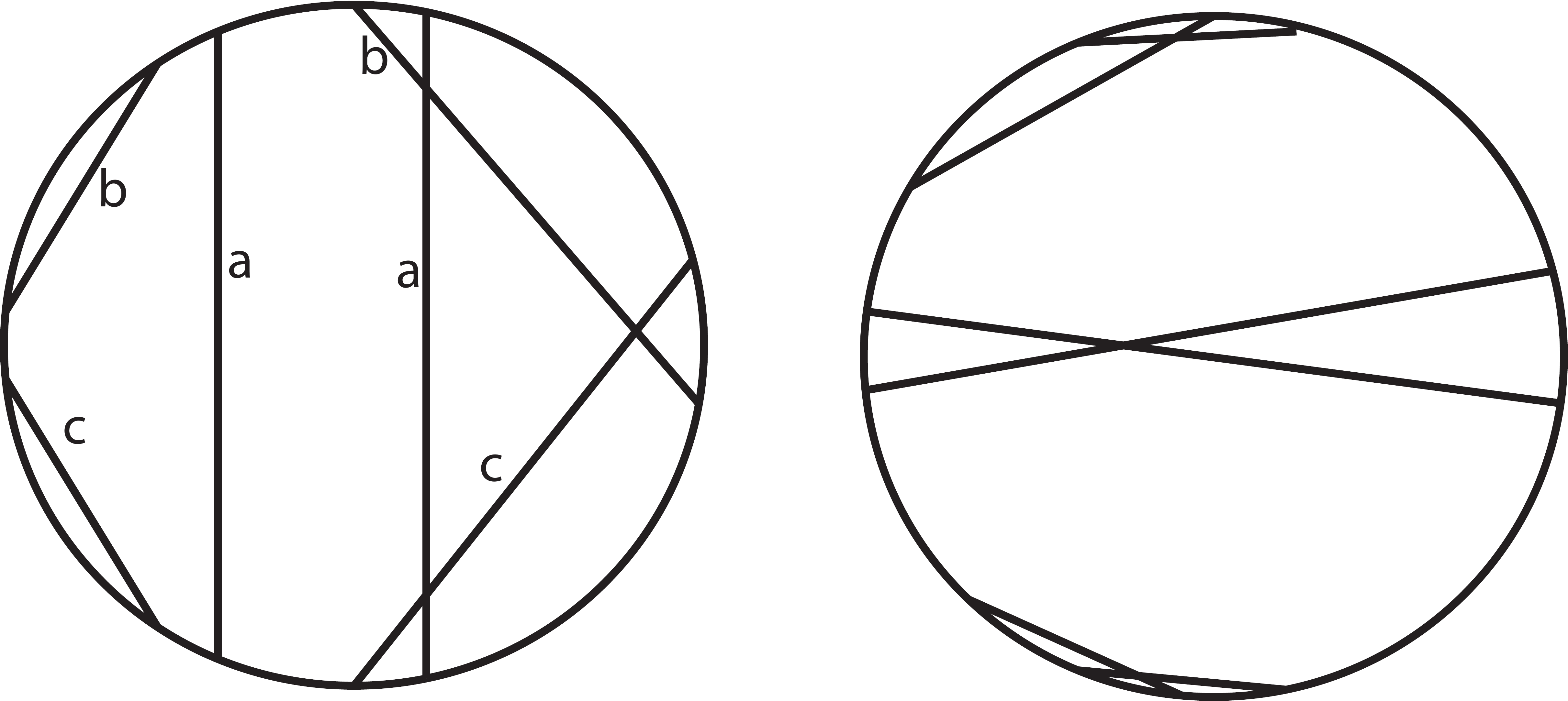}
\caption{Spanning disc with triple points}
\label{diag}
\end{figure}

On the figure on the left, the paired chords are marked with the same letters ($a$, $b$, $c$). The pairing depicted on the figure on the right has three crossings, but the corresponding double lines can be paired in such a way that the triple point appears on the crossing of the three double lines and we get a spanning disc for the knot.

The reason is that the chord diagram on the left is constructed from the diagram of the connected sum of two trivial free knots $K{\#}{\bar K}$ with an obvious pairing by applying the third Reidemeister move, and Reidemeister moves do not affect sliceness.

In general, connected sum of a free knot with its mirror image at a suitable point $K \to K{\#} {\bar K}$ allows one to construct an infinite series of non--trivial. slice free knots. \\
\end{rk}

Let $\Gamma$ be a framed 4-graph, let $K$ be a spanning complex which is an image of a 2-disc in general position. Our goal is to construct a new complex $\tilde{K}$ --- an image of a disc such that every double line starts and ends on its boundary.

The main idea behind the proof of Theorem \ref{mnthm} is the construction of some analog of a ``bracket'' (a smoothing) for a 2-complex (see \cite{Sbornik}). We construct a new complex which is the same as the previous one on its boundary but doesn't have any even double lines.

To achieve this goal we need some basic notions of the theory of 2-knots. Let us recall them following \cite{Parity, winter}.

A {\em 2-knot} (resp. {\em an $n$-component 2-link}) is a smooth embedding in general position of a 2-sphere $S^2$ (resp. disjoint union of $n$ spheres) into $\mathbb{R}^4$ or $S^4$ up to isotopy.

As in case of 1-dimensional knots, 2-knots can be presented with {\em diagrams}. 

A {\em diagram} of a 2-knot $K$ in $\mathbb{R}^3$ is a projection in general position of $K$ in $\mathbb{R}^4$ to a subspace $\mathbb{R}^3$.

Two diagrams represent the same knot if and only if they can be related by a finite sequence of {\em Roseman moves} which are shown on Fig. \ref{roseman} (this figure is due to K. Kawamura, K. Oshiro and K. Tanaka, see, for example, \cite{Tan}).

\begin{figure}
\centering\includegraphics[width=450pt]{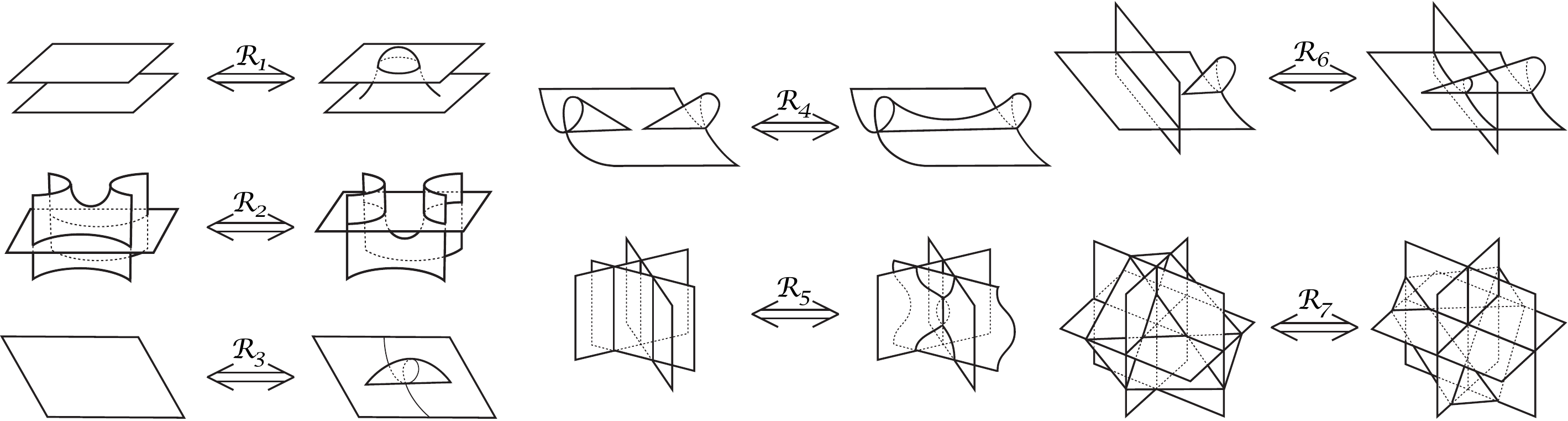}
\caption{Roseman Moves}
\label{roseman}
\end{figure}

Another way of representing 2-knots is via {\em spherical diagrams}. Those diagrams are a 2-dimensional analog to chord diagrams of 1-knots.

\begin{dfn}
\label{def:gauss}
A {\rm spherical diagram} (or Gauss diagram) is a 2-complex consisting of a sphere $S$ and a set $D$ of marked curves on it such that:
\begin{enumerate}
\item Every curve is either closed or ends with a cusp, the number of cusps is finite;
\item Every curve of the set $D$ is paired with exactly one curve of that set, one of the paired curves is marked as {\rm upper}, both curves are oriented (marked with arrows);
\item Two curves ending in the same cusp are paired and both arrows either look towards the cusp or away from it;
\item If two curves intersect, the curves paired with them intersect as well (thus a triple point appears on the sphere $S$ three times, see the upper part of Fig. \ref{second_smoothing});
\item All triple points are geometric.
\end{enumerate}
\end{dfn}

Roseman moves can be naturally translated to the language of spherical diagrams. If one forgets the ``over -- under'' information on a spherical diagram and considers such diagrams modulo Roseman moves, one obtains {\em free 2-knots}.

In 1-dimensional knot theory {\it smoothing} of a crossing means cutting out a crossing with its small neighbourhood and reglueing two pairs of half-edges in one of the possible two ways (see Fig. \ref{1-smooth}).

\begin{figure}
\centering\includegraphics[width=50pt]{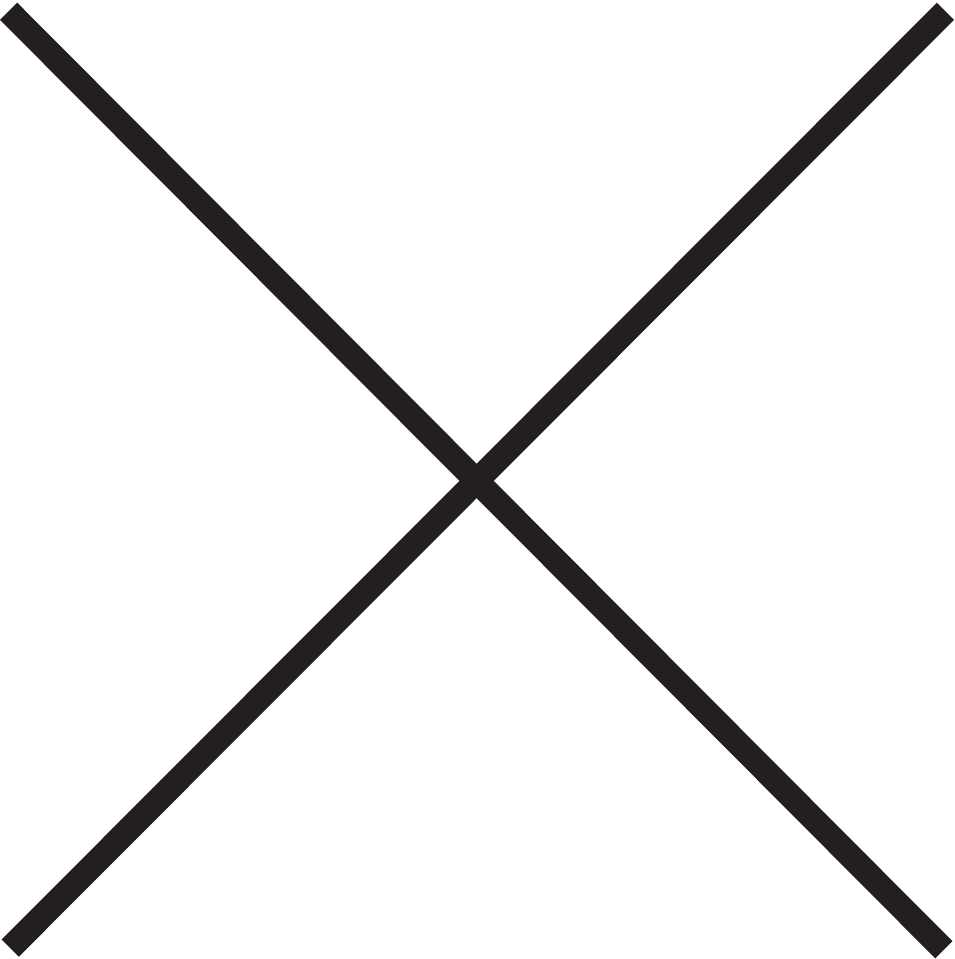} \\
\vspace{10mm}
\centering\includegraphics[width=50pt]{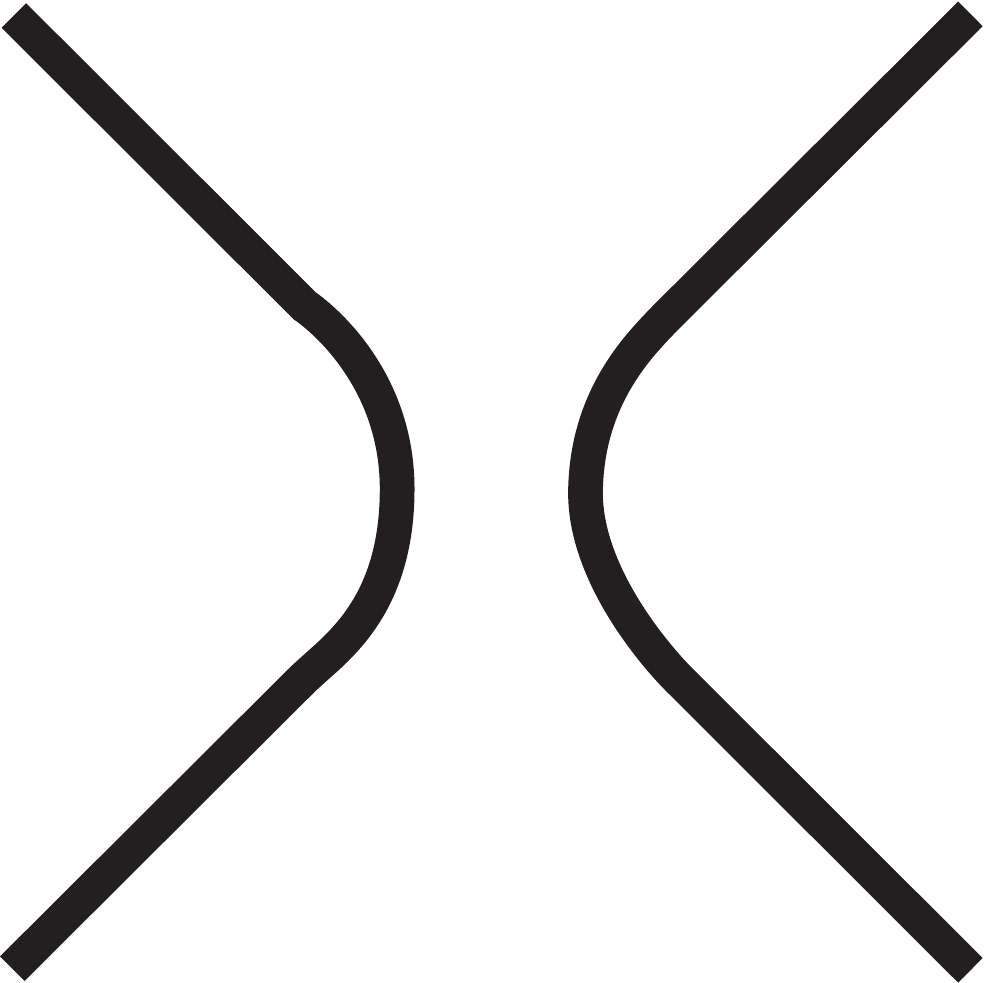} ~~~~~~~ \includegraphics[width=50pt]{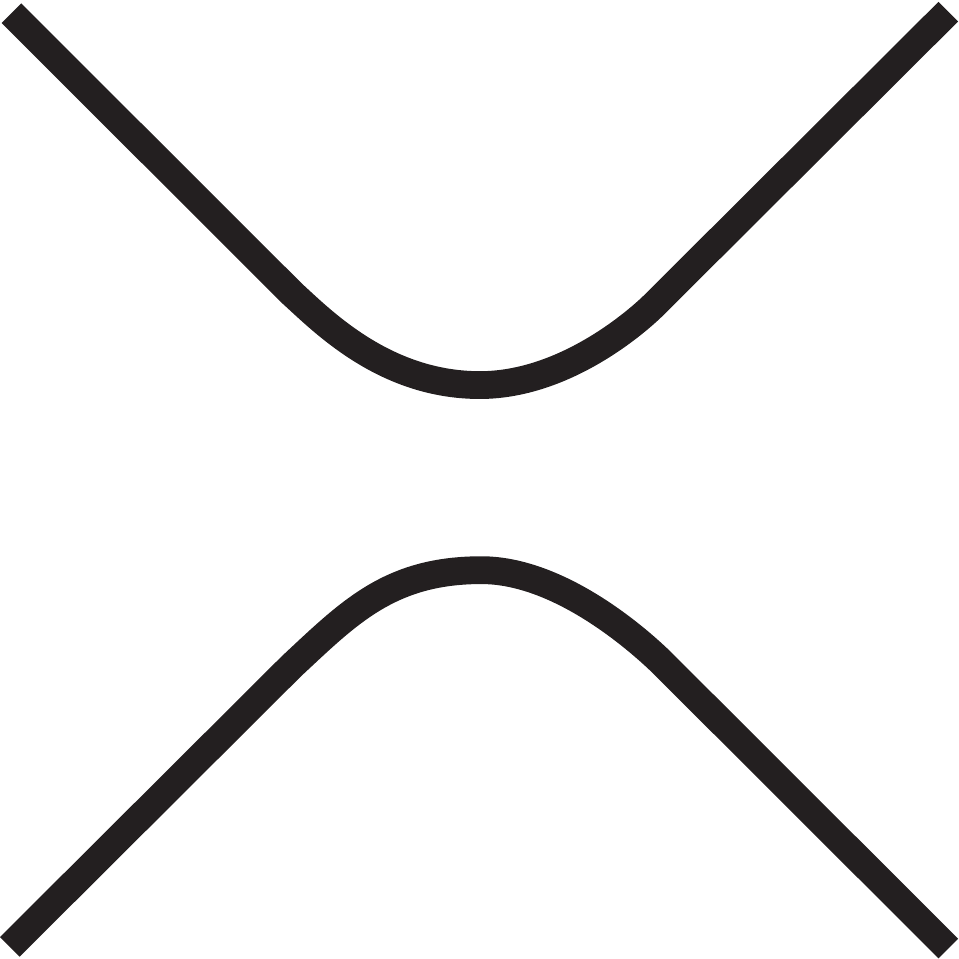}
\caption{Smoothings of a crossing}
\label{1-smooth}
\end{figure}

In 2-dimensional case not crossings, but double lines are smoothed. That is, locally one can think of a 2-dimensional smoothing as a 1-dimensional smoothing on a transversal section, multiplied by a segment. The trick is to define smoothing of a whole double line (or a family of such) in a compatible way. Here we present one approach, using spherical diagrams. 

Consider a pair of paired curves $\gamma, \gamma'$ on a spherical diagram of a knot. Cut the diagram along those lines. The resulting multi-component complex has four boundary components: $\gamma_1, \gamma_1'$ on the sphere, and $\gamma_2, \gamma_2'$ on the cut out pieces.

Now we glue those curves back together (respecting their orientation and intersecting curves) following the rule: non-prime curve must be glued to a prime one, see Fig. \ref{first_smoothing}.

\begin{figure}
\centering\includegraphics[width=300pt]{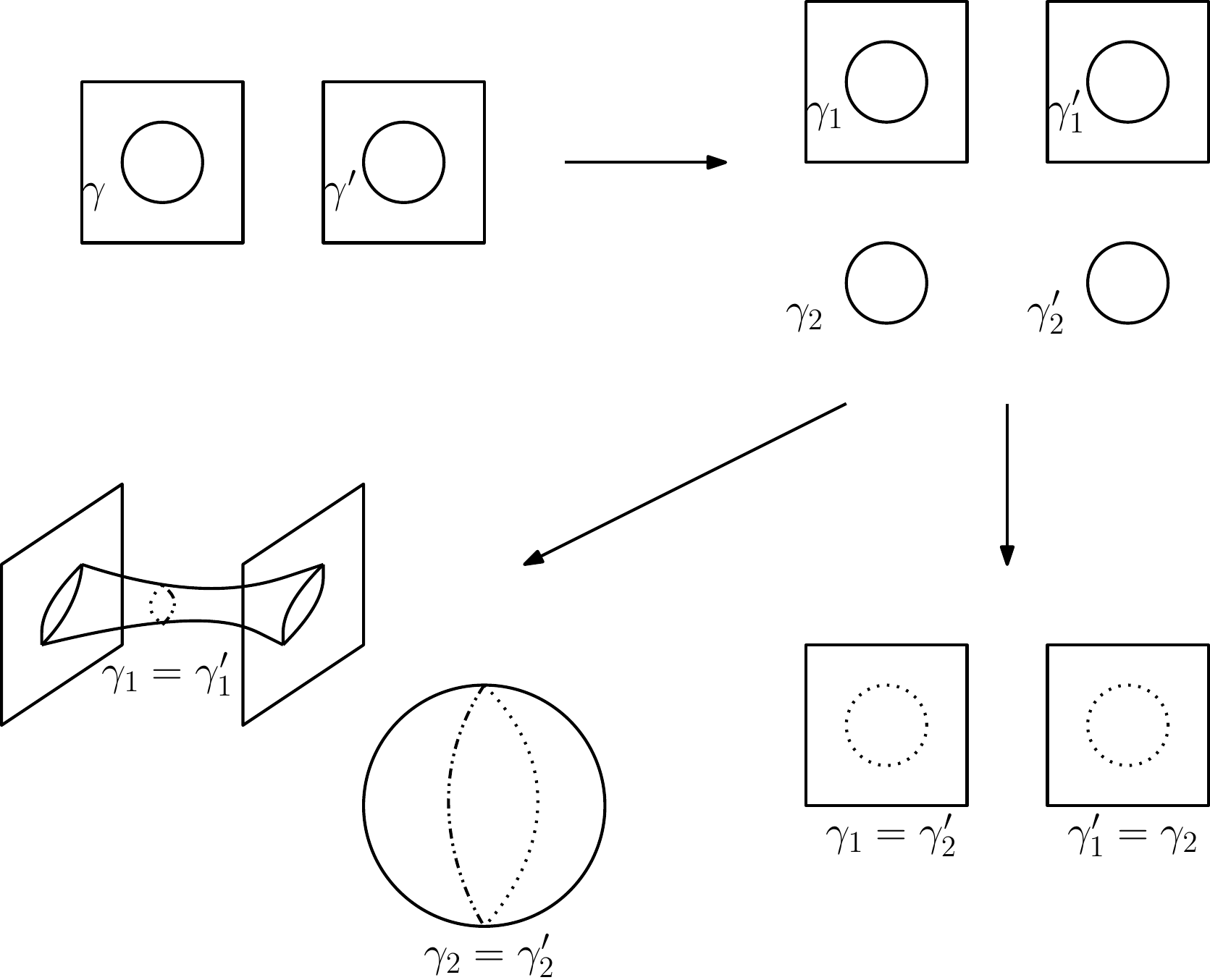} \\
\caption{Smoothings of a double line: the curve $\gamma$ is paired with $\gamma'$; the spherical diagram is cut along those lines and reglued in two possible ways}
\label{first_smoothing}
\end{figure}

To complete our smoothing procedure we need to understand, what happens to the triple points on the smoothed line.

It is easy to see that the reglueing of $\gamma$'s naturally induces a (1-dimensional) smoothing near the third preimage of a triple point involved, see Fig. \ref{second_smoothing}. The type of this smoothing exactly corresponds to the choice of pairing between the boundary components $\gamma$: $\gamma_1$ to $\gamma_1'$ or to $\gamma_2'$. 

\begin{figure}
\centering\includegraphics[width=300pt]{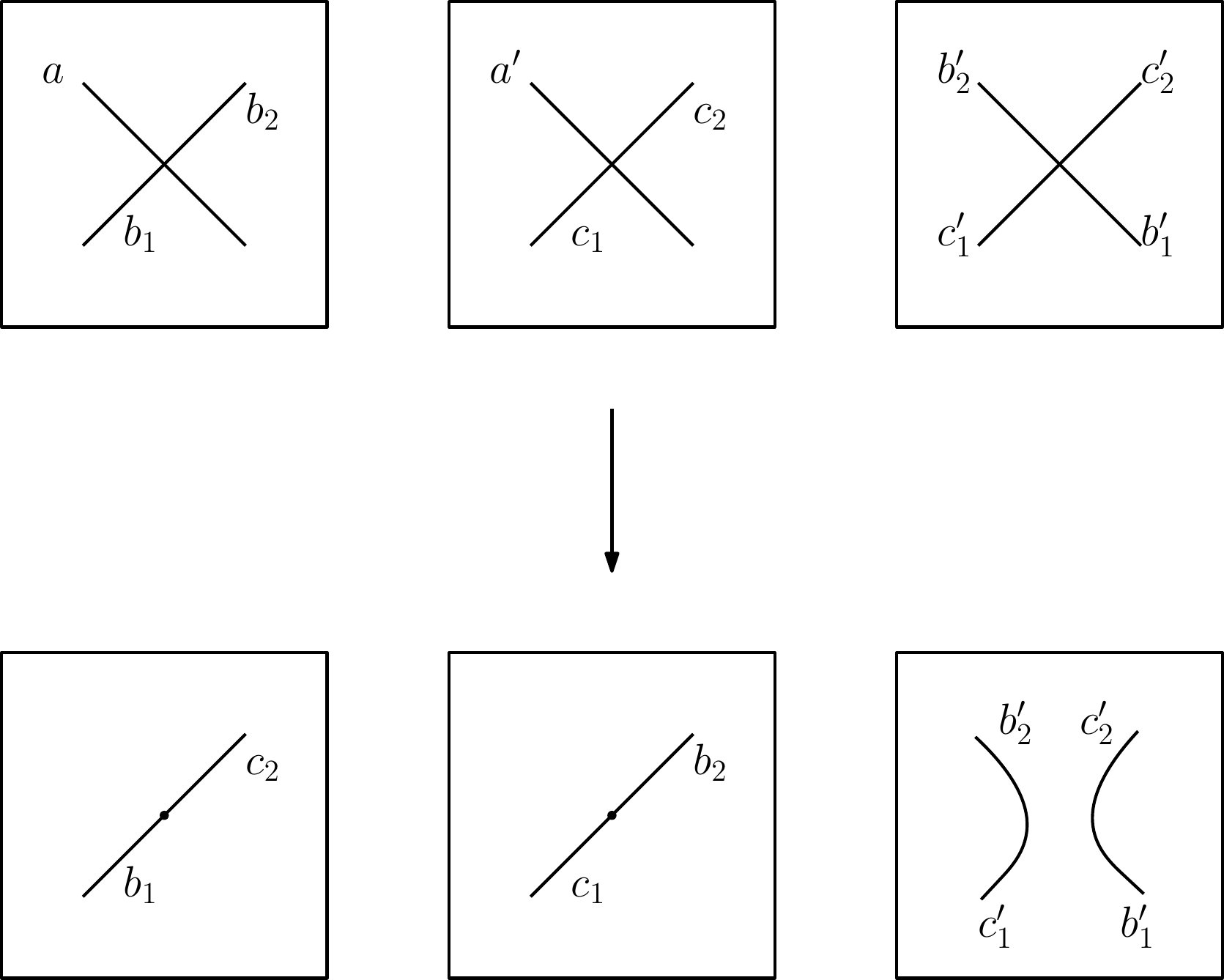} \\
\caption{Smoothing of the double line $a$ removes $a$ and $a'$ from the diagram and induces a 1-dimensional smoothing of the crossing of the curves $b'$ and $c'$}
\label{second_smoothing}
\end{figure}

Note, that the smoothing of a self-intersecting double line is defined correctly. In that case two cusps are produced, see Fig. \ref{third_smoothing}.

\begin{figure}
\centering\includegraphics[width=300pt]{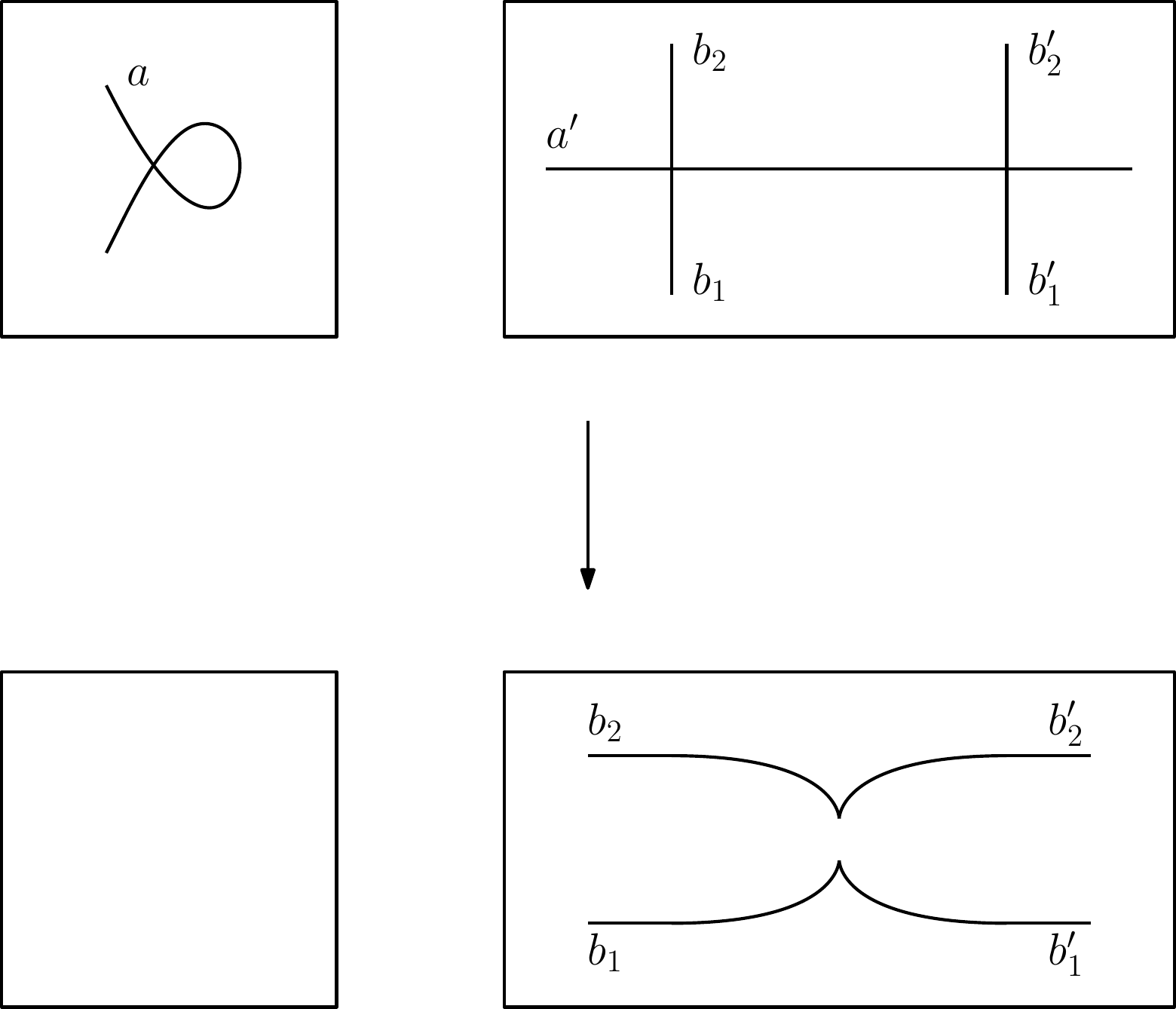} \\
\caption{Smoothing of a self--crossing double line $a$ removes $a$ and $a'$ from the diagram and produces cusps}
\label{third_smoothing}
\end{figure}

That procedure defines a {\it smoothing} of a double line. Smoothing of a set of double lines is done one by one in arbitrary order.

The following lemma is needed to prove the main theorem \ref{mnthm}:

\begin{lm}
\label{smoothing_lemma}
For every free 2-knot diagram there exists a smoothing yielding the trivial knot.
\end{lm}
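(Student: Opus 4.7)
The plan is to argue by induction on the number $n$ of pairs of paired curves in the spherical diagram. The base case $n = 0$ is immediate: a spherical diagram with no marked curves is simply the sphere $S^{2}$ without any additional structure, which is by definition a diagram of the trivial free 2-knot.

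For the inductive step, given a diagram $D$ with $n \geq 1$ pairs, I would pick any pair of curves $(\gamma, \gamma')$ and apply the smoothing procedure described just before the statement of the lemma; either of the two regluing choices of Fig.~\ref{first_smoothing} will serve. By construction this excises both $\gamma$ and $\gamma'$ from the diagram and, at every triple point that lay on $\gamma$ or $\gamma'$, induces a 1-dimensional smoothing of the crossing at the third preimage (producing a pair of cusps on other curves in the self-crossing case of Fig.~\ref{third_smoothing}). The remaining pairs $(b,b'), (c,c'), \ldots$ stay paired and no new double line is created, so the resulting diagram $D'$ is a spherical diagram with exactly $n-1$ pairs. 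Applying the inductive hypothesis to $D'$ then produces a sequence of smoothings yielding the trivial free 2-knot, and prepending the first smoothing gives the required sequence for $D$.

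The step I expect to be the main obstacle is checking that the output of a single smoothing is a bona fide spherical diagram in the sense of Definition~\ref{def:gauss}. Three things need to be verified carefully: (a) that the cut-and-reglue on the underlying sphere produces again an $S^{2}$ rather than a surface of higher genus, which should follow from the observation that swapping two boundary components inside a single $S^{2}$ preserves both connectedness and Euler characteristic; (b) that the cusp condition (item 3 of Definition~\ref{def:gauss}) is respected at any newly created cusps, which requires tracking the orientations of the curves meeting at a self-intersection of $\gamma$ through the induced 1-dimensional smoothing; and (c) that the triple-point compatibility condition (item 4) is preserved for every pair $(b,b')$ which originally met $\gamma$ or $\gamma'$ transversally, so that the induced local smoothings on $b$ and on $b'$ agree. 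Once these three local checks are carried out, the induction closes essentially without further work, and the resulting iterated smoothing trivialises the whole diagram.
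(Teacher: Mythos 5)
There is a genuine gap: you assert that \emph{either} of the two regluing choices of Fig.~\ref{first_smoothing} will serve, and accordingly claim under (a) that the cut-and-reglue always preserves connectedness. This is false, and it is exactly the point on which the lemma turns. Cutting the sphere along $\gamma$ and $\gamma'$ produces four boundary curves, and the two ways of matching them up are not interchangeable: one reassembles a single sphere, while the other disconnects the surface, i.e.\ turns the 2-knot diagram into a diagram of a 2-\emph{link} with an additional component (the Euler characteristic is preserved either way, but connectedness is not). If at any step you make the wrong choice, the iterated smoothing terminates at a trivial link of several components rather than the trivial knot, so your induction does not close. The paper's proof is organised precisely around this observation: of the two smoothings of each double line, one adds a connected component and the other does not, and one must always select the component-preserving one. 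The existential quantifier in the statement (``there exists a smoothing'') is carried entirely by this choice, which your argument discards.

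A secondary, smaller issue is the termination argument. Your induction on the number $n$ of pairs presumes that every smoothing lowers $n$ by exactly one. The paper is more careful: for a self-intersecting double line (Fig.~\ref{third_smoothing}) the smoothing need not reduce the number of double lines, but it does reduce the number of triple points, so the paper takes the total number of double lines plus triple points as the strictly decreasing complexity measure. Your local checks (a)--(c) are sensible things to verify (and the paper does not spell them out in detail either), but the substantive missing idea is the correct, component-preserving choice of smoothing at each step.
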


\begin{proof}
Consider a smoothing of the full set of double lines of a given diagram.

Every double line can be smoothened in two ways. One of them produces an additional connected component (thus transforming the knot into a link) and the other one doesn't change the number of connected components.

Let us choose the smoothing of every double line not changing the number of connected components. Thus, after smoothing of every double line we obtain a new diagram of a free 2-knot. Note, that every smoothing either reduces the number of double lines of the diagram or (in case of a self--intersecting double line) doesn't change it but reduces the number of triple points. Thus the total number of triple points and double lines strictly decreases with every smoothing.

That means that the process of smoothing of all the double lines of a diagram is finite and yields an empty spherical digram, in other words --- a diagram of the trivial knot. 
\end{proof}

Now we are ready to prove Theorem \ref{mnthm}.

In our case the parity is defined as in \cite{Cobordism, Parity}: for every point on the complex with exactly two preimages on the disc we connect those preimages by a curve $\gamma$ transversally intersecting the set of double lines in a finite number of points, not going through triple points and behaving in a compatible way near the endpoints. Then we count the number of intersection between $\gamma$ and the double lines mod 2. It is easy to verify that this number is constant along a double line and all the parity axioms are satisfied (see \cite{Parity}), namely, a double line ending in a cusp is even and among the tree double lines meeting in a triple point either zero or two of them are odd. This parity is called {\em Gaussian}. \\

Now consider the graph $\Gamma$ and the spanning complex $K$. The complex $K$ has double lines of two types: double lines with their endpoints on the boundary and the ``interior'' double lines. The smoothing operation defined above can be easily generalized to the case of interior double lines on a complex with boundary.

Let us smooth all interior double lines of the complex $K$. Due to the smoothing lemma \ref{smoothing_lemma} we thus obtain a 2-complex $\tilde{K}$ which is an image of a disc with no interior double lines, that is all double lines of the complex $\tilde{K}$ have their endpoints on the boundary of the complex. We claim that this complex has neither cusps no triple points.

Indeed, the smoothing operation of the interior double lines leaves the neighbourhood of the boundary $\partial K$ intact. The Gaussian parity of the double lines with their ends on the boundary is the same as the Gaussian parity of the corresponding vertices of the graph $\Gamma$ (that is, the crossings of the free knot). That means that all the double lines of the complex $\tilde{K}$ are odd. But due to parity properties, every double line ending with a cusp is even and there is at least one even double line among the three double lines meeting in a triple point. Thus, the complex $\tilde{K}$ has no cusps and triple points.

The double lines of the complex $\tilde{K}$ define a pairing with no intersections of the chords of the diagram corresponding to the graph $\Gamma$. Thus, Theorem \ref{mnthm} is proved. \\

The authors are grateful to Scott Carter and Seiichi Kamada for various useful discussions.


\end{document}